\numberwithin{equation}{section}
\def\H{{\cal H}}
\def\R{\mathbb{R}}
\def\T{\mathbb{T}}
\def\H1{H^1(\R)}
\newtheorem{thm}{Theorem}[section]
\newtheorem{lem}{Lemma}[section]
\newcommand{\Extend}[5]{\ext@arrow0099{\arrowfill@#1#2#3}{#4}{#5}}
\begin{document}

\setcounter{page}{1}

\title[Global Well-posedness for DNLS]{Global well-posedness for the nonlinear\\
Schr\"{o}dinger equation with derivative \\
in energy space}

\author{Yifei Wu}
\address{School of  Mathematical Sciences, Beijing Normal University, Laboratory of Mathematics and Complex Systems,
Ministry of Education, Beijing 100875, P.R.China}
\email{yifei@bnu.edu.cn}
\thanks{The author was partially supported
by the NSF of China (No. 11101042), and the Fundamental Research Funds for the Central Universities of China.}

\subjclass[2010]{Primary  35Q55; Secondary 35A01, 35B44}


\keywords{Nonlinear Schr\"{o}dinger equation with derivative,
global well-posedness, blow-up, half-line}

\maketitle

\begin{abstract}\noindent
In this paper, we prove that there exists some small
$\varepsilon_*>0$, such that the derivative nonlinear
Schr\"{o}dinger equation (DNLS) is global well-posedness in the
energy space, provided that the initial data $u_0\in \H1$ satisfies
$\|u_0\|_{L^2}<\sqrt{2\pi}+\varepsilon_*$. This result shows us that
there are no blow up solutions whose masses slightly exceed $2\pi$,
even if their energies are negative.  This phenomenon is much different from the behavior
of nonlinear Schr\"odinger equation with critical nonlinearity.  The technique used is a variational argument together with the
momentum conservation law.
Further, for the DNLS on half-line $\R^+$, we show the blow-up for the solution with negative energy.
\end{abstract}

\section{Introduction}
We study the following Cauchy problem of the nonlinear
Schr\"{o}dinger equation with derivative (DNLS):
 \begin{equation}\label{eqs:DNLS}
   \left\{ \aligned
    &i\partial_{t}u+\partial_{x}^{2}u=i\lambda\partial_x(|u|^2u),\qquad t\in \R, x\in \R,
    \\
    &u(0,x)  =u_0(x)\in   H^1(\mathbb{R}),
   \endaligned
  \right.
 \end{equation}
where $\lambda\in \R$.  It arises from studying the propagation of
circularly polarized Alfv\'{e}n waves in magnetized plasma with a
constant magnetic field, see \cite{MOMT-PHY, M-PHY, SuSu-book} and the references therein.

This equation is $L^2$-critical in the sense that both the equation
and the $L^2$-norm are invariant under the scaling transform
\begin{equation*}
u_\alpha(t,x) = \alpha^{\frac
12} u(\alpha^2 t, \alpha x),\quad \alpha>0.
\end{equation*}
It has the same scaling invariance as the quintic nonlinear
Schr\"{o}dinger equation,
\begin{equation*}
i\partial_{t}u+\partial_{x}^{2}u+\mu |u|^4u=0, \quad t\in \R,x\in\R,
\end{equation*}
and the quintic generalized Korteweg-de Vries equation,
\begin{equation*}
    \partial_t u +  \partial^3_{x} u +\mu \partial_x(u^5)=0,\quad t\in \R,x\in\R.
 \end{equation*}

One may always take $\lambda=1$ in (\ref{eqs:DNLS}), since the general case can be reduced to this case by the following two transforms. First, we
apply the transform
$$
u(t,x)\mapsto\bar{u}(-t,x),
$$
then reduce the equation to the case of $\lambda>0$. Then we take the rescaling transform
$$
u(t,x)\mapsto\frac{1}{\sqrt{\lambda}}u(t,x)
$$
and reduce it to the case of $\lambda=1$.
So in this sense, the equation (\ref{eqs:DNLS}) can always be regarded as the focusing
equation. From now on, we always assume
that $\lambda=1$ in (\ref{eqs:DNLS}).

The $H^1$-solution of (\ref{eqs:DNLS}) obeys the following three conservation laws. The first one is the conservation of the mass
\begin{equation}\label{mass-law}
M(u(t)):=\int_{\R}|u(t)|^2\,dx=M(u_0),
\end{equation}
the second one is the conservation of energy
\begin{equation}\label{energy-law}
E_D(u(t)):= \int_{\R}\Big(| u_x(t)|^2+\frac{3}{2}\text{Im}
|u(t)|^2u(t)\overline{u_x(t)}+\frac{1}{2}|u(t)|^6\Big)\,dx=E_D(u_0),
\end{equation}
the third one is the conservation of momentum (see \eqref{mom-law} below)
\begin{equation}
P_D(u(t)):=\mbox{Im} \int_{\R}\bar u(t) u_x(t)\,dx-\frac 12\int_{\R}
|u(t)|^4\,dx=P_D(u_0).
\end{equation}

Local well-posedness for the Cauchy problem (\ref{eqs:DNLS}) is
well-understood. It was proved for the energy space $H^1(\R)$ by Hayashi and Ozawa in \cite{Ha-93-DNLS, HaOz-92-DNLS,
HaOz-94-DNLS}, see also Guo and Tan \cite{GuTa91} for earlier result
in smooth spaces. For rough data below the energy space, Takaoka \cite{Ta-99-DNLS-LWP} proved
the local well-posedness in $H^s(\R)$ for $s\geq 1/2$.This result was shown to be sharp in the
sense that the flow map fails to be uniformly $C^0$ for $s<1/2$, see
Biagioni and Linares \cite{BiLi-01-Illposed-DNLS-BO} and Takaoka
\cite{Ta-01-DNLS-GWP}.

The global well-posedness for (\ref{eqs:DNLS}) has been also widely
studied.
By using mass and energy conservation laws, and by developing the gauge
transformations, Hayashi and Ozawa \cite{HaOz-94-DNLS, Oz-96-DNLS}
proved that the problem (\ref{eqs:DNLS}) is globally well-posed in energy space
$H^1(\R)$ under the condition
\begin{equation}\label{small condition}
\|u_0\|_{L^2}<\sqrt{2\pi }.
\end{equation}
Further, for initial data of regularity below the energy space, Colliander et al \cite{CKSTT-01-DNLS, CKSTT-02-DNLS} proved the
global well-posedness for (\ref{eqs:DNLS}) in $H^s(\R)$ for $s>\frac
12$, under the condition (\ref{small condition}). Recently, Miao, Wu and Xu \cite{Miao-Wu-Xu:2011:DNLS} proved that (\ref{eqs:DNLS}) is globally
well-posed in the critical space $H^{\frac 12}(\R)$, also under the
condition (\ref{small condition}). For other works on the DNLS in the periodic case, see a few of examples \cite{Grhe-95,Herr,NaOhSt-12,Win}.

As is mentioned above, all the results on global existence for initial
data were obtained under the assumption of \eqref{small condition}.
Since $\sqrt{2\pi }$ is just the mass of the ground state of the corresponding elliptic problem,
the condition \eqref{small condition} was naturally used to keep the energy positive; see \cite{CKSTT-01-DNLS, Miao-Wu-Xu:2011:DNLS} for examples.
Now one may wonder what happens to the well-posedness for the solution when  \eqref{small condition} is not fulfilled.
Our first main result in this paper is to improve the assumption \eqref{small condition} and obtain the global well-posedness as follow.
\begin{thm}\label{thm:main1}  There exists a small $\varepsilon_*>0$ such that for any $u_0\in \H1$
with
\begin{equation}\label{small condition-further}
\int_{\R}|u_0(x)|^2\,dx <{2\pi }+\varepsilon_*,
\end{equation}
the Cauchy problem (\ref{eqs:DNLS}) ($\lambda=1$) is globally
well-posed in $H^1(\R)$ and the solution $u$ satisfies
$$
\|u\|_{L^\infty_t H^1_x}\le C(\varepsilon_*, \|u_0\|_{H^1}).
$$
\end{thm}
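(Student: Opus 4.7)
The plan is to establish a time-uniform a priori bound $\|u(t)\|_{H^1}\le C(\varepsilon_*,\|u_0\|_{H^1})$; combined with the Hayashi--Ozawa $H^1$ local theory, this immediately promotes local existence to global. Since $\|u(t)\|_{L^2}$ is already controlled by mass conservation, only $\|u_x(t)\|_{L^2}$ needs to be bounded. I argue by contradiction: assume there exist initial data $u_{0,n}$ with $\|u_{0,n}\|_{H^1}$ uniformly bounded and $\|u_{0,n}\|_{L^2}^2<2\pi+\varepsilon_*$ whose $H^1$-solutions $u_n$ satisfy $\|u_n(t_n)\|_{H^1}\to\infty$ at some times $t_n$, and then derive a contradiction from the three conservation laws.

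The key algebraic device is the gauge transformation
\[
v(t,x):=u(t,x)\exp\!\Big(-\tfrac{3i}{4}\int_{-\infty}^x|u(t,y)|^2\,dy\Big),
\]
which is an $L^2$-isometry under which energy and momentum take the clean form
\[
E_D(u)=\|v_x\|_{L^2}^2-\tfrac{1}{16}\|v\|_{L^6}^6,\qquad P_D(u)=\operatorname{Im}\!\int\!\bar v\,v_x\,dx+\tfrac14\!\int\!|v|^4\,dx.
\]
Combined with the sharp one-dimensional Gagliardo--Nirenberg inequality $\|v\|_{L^6}^6\le\tfrac{4}{\pi^2}\|v\|_{L^2}^4\|v_x\|_{L^2}^2$, whose extremals are rescalings, translates, and phase rotations of the ground state of $-Q''+Q=Q^5$, this is exactly what produces $\sqrt{2\pi}$ as the classical threshold.

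Assuming $\|v_x(t_n)\|_{L^2}\to\infty$, I set $\lambda_n:=\|v_x(t_n)\|_{L^2}^{-1}\to0$ and rescale $V_n(y):=\lambda_n^{1/2}v(t_n,\lambda_n y+x_n)$, where $x_n$ is chosen near a point of concentration. Then $\|(V_n)_y\|_{L^2}=1$, and energy conservation gives $\|V_n\|_{L^6}^6=16-16\lambda_n^2 E_D(u_{0,n})\to 16$. Applying a Bahouri--G\'erard profile decomposition to the sequence $\{V_n\}$ in $H^1(\R)$, one sees that every concentrating bubble has vanishing DNLS-energy and therefore, by Gagliardo--Nirenberg, must saturate the inequality and carry $L^2$-mass exactly $2\pi$. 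The smallness of $\varepsilon_*$ is essential here: the $L^2$-budget $\|V_n\|_{L^2}^2<2\pi+\varepsilon_*$ admits at most one such bubble, so after symmetries $V_n$ converges strongly in $H^1$ to a real ground-state profile $V_\infty$ with $\|V_\infty\|_{L^2}^2=2\pi$, and the (non-concentrating) remainder has $L^2$-mass $<\varepsilon_*$.

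Momentum conservation now provides the contradiction. The scaling of the two summands in $P_D$ yields
\[
\lambda_n\,P_D(u_n(t_n))=\operatorname{Im}\!\int\!\bar V_n\,(V_n)_y\,dy+\tfrac14\!\int\!|V_n|^4\,dy\longrightarrow\tfrac14\!\int\! V_\infty^4\,dy>0,
\]
because $V_\infty$ is real (so the imaginary part vanishes in the limit) and non-trivial. On the other hand, $\lambda_n\,P_D(u_n(t_n))=\lambda_n\,P_D(u_{0,n})\to0$, since $P_D(u_{0,n})$ is bounded by $\|u_{0,n}\|_{H^1}$ while $\lambda_n\to0$. The hard step of the plan is the concentration-compactness analysis: one must quantify an orbital-stability-type statement showing that any near-extremal sequence for the sharp Gagliardo--Nirenberg inequality whose $L^2$-mass is only slightly above $2\pi$ must converge strongly in $H^1$ to a single ground-state bubble modulo symmetries. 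Once that is in hand, the divergence of the quartic momentum term is robust to the $O(\varepsilon_*)$ perturbation and the argument closes.
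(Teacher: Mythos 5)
Your proposal is correct and follows essentially the same route as the paper: a contradiction argument in which the rescaled sequence at the blow-up times is shown, via energy conservation and a profile-decomposition/variational stability statement for the sharp Gagliardo--Nirenberg inequality, to converge in $H^1$ to a single ground-state bubble modulo symmetries, after which the $\lambda_n^{-1}$ scaling of the quartic term in the conserved momentum forces $\lambda_n$ to stay bounded away from zero. The ``hard step'' you flag is precisely the paper's Lemma \ref{lem:var-claim}, which the paper also proves by profile decomposition.
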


The technique used to prove Theorem \ref{thm:main1} is a
variational argument together with the momentum and energy
conservation laws. The key ingredient is the momentum conservation law, rather than
the energy
conservation law, upon which many (subcritical) problems rely when studying the global existence.
We argue for contradiction. Suppose that the solution of \eqref{eqs:DNLS} blows up at finite/infinite time $T$ and $t_n$ is a time sequence tending to $T$ such that $u(t_n)$ tends to infinity in $\H1$ norm. Then, thanks to the energy conservation law and a variational lemma from Merle \cite{Me-2001-gKdV}, $u(t_n)$ is close to the ground state $Q$ (see below for its definition) up to a spatial transformation, a  phase rotation and a scaling transformation. On the one hand, since $u(t_n)$ blows up at $T$, the scaling parameter $\lambda_n$ decays to zero; on the other hand, the conservation of momentum prevents $\lambda_n$ from tending to zero. This leads to a contradiction.

As mentioned above, Theorem \ref{thm:main1} improves the smallness of $L^2$-norm of the initial data of the
previous works on global existence (\cite{HaOz-94-DNLS, Oz-96-DNLS}). More importantly, it
reveals some special feature of the derivative nonlinear
Schr\"odinger equation. As discussed before, the smallness condition
(\ref{small condition}) in the previous works is imposed to
guarantee the positivity of the energy $E_D(u(t))$. Indeed, by using a
variant gauge transformation
\begin{equation}\label{gauge1}
v(t,x):=e^{-\frac{3}{4}i\int_{-\infty}^x |u(t,y)|^2\,dy}u(t,x),
\end{equation}
the energy is deduced to
\begin{equation}\label{energy-43}
E_D(u(t))=\|v_x(t)\|_{L^2_x}^2-\frac{1}{16}\|v(t)\|^6_{L^6_x}:=
E(v(t)),
\end{equation}
and then the positivity of $E(v)$ is followed by the sharp Gagliardo-Nirenberg inequality (see
\cite{W})
\begin{equation}\label{sharp Gagliardo-Nirenberg}
\|f\|_{L^6}^6\leq \frac{4}{\pi^2}\|f\|_{L^2}^4\|f_x\|_{L^2}^2.
\end{equation}
Once the mass is greater than $2\pi$, the positive energy can not be maintained.
To see this, we first
make use of the gauge transformation \eqref{gauge1}, and rewrite
 \eqref{eqs:DNLS} as
\begin{equation}\label{eqs:DNLS-under-=gauge1}
     i\partial_{t}v+\partial_{x}^{2}v=\frac i2|v|^2v_x-\frac i2v^2\bar{v}_x-\frac
     3{16}|v|^4v.
\end{equation}
There exists an obviously standing wave $e^{it}Q$ of
\eqref{eqs:DNLS-under-=gauge1}, where $Q$ is the unique (up to some
symmetries) positive solution of the following elliptic equation
$$
-Q_{xx}+Q-\frac 3{16}Q^5=0.
$$
This leads to the standing wave solution corresponding to the
equation \eqref{eqs:DNLS},
\begin{equation*}
   R(t,x):= e^{it+\frac{3}{4}i\int_{-\infty}^x
   Q^2\,dy}Q(x).
\end{equation*}
So on one hand, as a byproduct our result implies the stability of the standing wave solution, which has been proved by Colin and Ohta \cite{CoOh-06-DNLS}.
On the other hand,
$$
\|Q\|_{L^2}=\sqrt{2\pi},  \quad E(Q)=0,
$$
and the Fr\'echet derivation of the functional $ E(v)$ at $Q$ satisfies
$$
\delta E(Q)\cdot Q=-2\pi<0.
$$
These imply that there exists a $u_0$ such that $u_0$ obeys \eqref{small
condition-further} and $E_D(u_0)<0$. Therefore, there indeed exist global solutions
with negative energy, as stated in Theorem \ref{thm:main1}. Obviously this is much different from
the focusing, quintic nonlinear Schr\"{o}dinger equation \eqref{quintic-NLS} and focusing, quintic
generalized Korteweg-de Vries equation \eqref{gKdV-5}. For \eqref{quintic-NLS},
Ogawa and Tsutsumi \cite{OgTs92} proved that the solutions with the
initial data belonging to $\H1$ and negative energy must blow up in
finite time;  for \eqref{gKdV-5} Martel and Merle \cite{MaMe-2002-gKdV, Me-2001-gKdV} proved that the solutions with the
initial data belonging to $\H1$, negative energy and obeying some further decay
conditions blow up in finite time. In Section 3 below we will discuss some
differences among these three equations, in particular from the viewpoint of the
virial arguments.

Moreover, the situation of the Cauchy problem and the initial boundary value problem of the equation \eqref{eqs:DNLS} are much different. We consider the following Cauchy-Dirichlet problem of the nonlinear Schr\"odinger equation with derivative on half-line $\R^+$,
 \begin{equation}\label{DNLS-halfline}
   \left\{ \aligned
    &i\partial_{t}u+\partial_{x}^{2}u=i\partial_x(|u|^2u),\qquad t\in \R, x\in (0,+\infty),
    \\
    &u(0,x)  =u_0(x),
    \\
    &u(t,0)  =0.
   \endaligned
  \right.
 \end{equation}
We show that under some assumptions, the solution must blow up in finite time if its energy is negative.
\begin{thm}\label{thm:main2} Let  $u_0\in H^2(\R^+)$ and $xu_0\in L^2(\R^+)$, and let $u$ be the corresponding solution of \eqref{DNLS-halfline} which exists on the (right) maximal lifetime $[0,T_*)$. If $E(u_0)<0$,  then $T_*<\infty$. Moreover,
there exists a constant $C=C(u_0)>0$, such that
$$
\|u_x(t,x)\|_{L^2(\R^+)}
\ge \frac{C}{\sqrt{T_*-t}} \to \infty, \quad \mbox{ as } t\to  T^*-.
$$
\end{thm}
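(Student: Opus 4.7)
The plan is a Glassey-type variance argument, adapted to the half-line. The Dirichlet condition $u(t,0)=0$ is essential --- it kills the boundary terms that would otherwise spoil the virial identity --- and the one-sided positivity $x\geq 0$ provides a crucial sign at the end. Both features fail on the whole line, consistent with Theorem~\ref{thm:main1}.

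First I would transfer the gauge transformation \eqref{gauge1} to $\R^+$ by setting
\[
v(t,x) := \exp\!\Bigl(-\tfrac{3i}{4}\int_0^x |u(t,y)|^2\,dy\Bigr)u(t,x), \qquad x\geq 0.
\]
Then $|v|=|u|$, $v(t,0)=0$, and $v$ satisfies \eqref{eqs:DNLS-under-=gauge1} on $(0,\infty)$. The identity $E_D(u)=\|v_x\|_{L^2(\R^+)}^2-\tfrac{1}{16}\|v\|_{L^6(\R^+)}^6=:E(v)$ is a pointwise algebraic consequence of the transformation (no integration by parts needed), and the verification that $\tfrac{d}{dt}E(v)=0$ on $\R^+$ reduces to a boundary contribution at $x=0$ which vanishes because $v(t,0)=0$. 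Hence $E(v(t))=E(u_0)<0$ on $[0,T_*)$.

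Next, set $I(t):=\int_0^\infty x^2|v(t,x)|^2\,dx=\int_0^\infty x^2|u(t,x)|^2\,dx$. Under the hypotheses $u_0\in H^2(\R^+)$ and $xu_0\in L^2(\R^+)$, standard propagation of weighted regularity gives $I\in C^2([0,T_*))$. Since the derivative-type nonlinearities $\tfrac{i}{2}|v|^2v_x-\tfrac{i}{2}v^2\bar v_x$ in \eqref{eqs:DNLS-under-=gauge1} contribute zero to $\partial_t|v|^2$, the mass current of $v$ is just $2\,\text{Im}(\bar v v_x)$ and $I'(t)=4\int_0^\infty x\,\text{Im}(\bar v v_x)\,dx$. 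Differentiating once more, writing out $\partial_t\,\text{Im}(\bar v v_x)$ via the equation, and integrating by parts repeatedly on $[0,\infty)$ (every boundary contribution at $x=0$ killed by $v(t,0)=0$), one arrives at the key identity
\[
\frac{d}{dt}\!\left[I'(t)+\int_0^\infty x|v(t,x)|^4\,dx\right]=8\,E(u_0).
\]
Integrating once and using $\int_0^\infty x|v|^4\,dx\geq 0$ (this is where the half-line geometry enters) gives $I'(t)\leq A_0+8E(u_0)\,t$ with $A_0:=I'(0)+\int_0^\infty x|v_0|^4\,dx$; integrating again yields
\[
I(t)\leq I(0)+A_0\,t+4E(u_0)\,t^2.
\]
Since $E(u_0)<0$, the right-hand side becomes negative in finite time, contradicting $I(t)\geq 0$, so $T_*<\infty$. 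The blow-up rate then follows from Cauchy--Schwarz, $|I'(t)|^2\leq 16\,I(t)\,\|v_x(t)\|_{L^2(\R^+)}^2$, combined with the quadratic upper bound on $I$ (which forces $I(t)\leq C(T_*-t)$ near $T_*$) and the control on $I'(t)$ given by the key identity; this yields $\|v_x(t)\|_{L^2}^2\geq C/(T_*-t)$, which transfers to $\|u_x(t)\|_{L^2(\R^+)}$ via the pointwise gauge bound $|u_x|\leq |v_x|+\tfrac{3}{4}|v|^3$ together with conservation of mass.

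The main obstacle is establishing the key identity. The derivative-type nonlinearities in \eqref{eqs:DNLS-under-=gauge1} force $\partial_t\,\text{Im}(\bar v v_x)$ to contain terms --- notably $|v|^2\,\text{Im}(\bar v v_{xx})$ --- that are neither sign-definite pointwise nor pure total $x$-derivatives. The heart of the computation is showing that, after multiplication by $x$, integration over $[0,\infty)$, and use of the rewriting $|v|^2\,\text{Im}(\bar v v_{xx})=\partial_x[|v|^2\,\text{Im}(\bar v v_x)]-\text{Im}((\bar v v_x)^2)$ followed by further integration by parts, all such contributions reduce to surface terms at $x=0$ (killed by $v(t,0)=0$) plus precisely $-\tfrac{d}{dt}\int_0^\infty x|v|^4\,dx$, which is exactly the counterterm that closes the identity.
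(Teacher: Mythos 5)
Your proposal is correct and follows essentially the same route as the paper: the gauge transform to \eqref{eqs:DNLS-under-=gauge1} on $\R^+$, the Glassey variance $I(t)=\int_0^\infty x^2|v|^2\,dx$, the modified virial identity $\frac{d}{dt}\bigl[I'(t)+\int_0^\infty x|v|^4\,dx\bigr]=8E(u_0)$ (which is exactly twice the half-line version of \eqref{virial-2}, obtained in the paper via the intermediate gauge $w=\mathcal G_{\frac14}v$), and the one-sided positivity of the surplus term $\int_0^\infty x|v|^4\,dx$. The only cosmetic difference is the final rate: you use $|I'(t)|^2\le 16\,I(t)\|v_x(t)\|_{L^2}^2$ (which needs the easy extra observation that $I'$ is bounded away from zero near the vanishing time), while the paper uses the slightly more direct bound $\|v_0\|_{L^2}^2=-2\mathrm{Re}\int_0^\infty xv\overline{v_x}\,dx\le 2\sqrt{I(t)}\,\|v_x(t)\|_{L^2}$.
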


Lastly, we remark that it remains open for DNLS equation \eqref{eqs:DNLS}
whether there exists an $\H1$ initial data of much larger $L^2$-norm such that the corresponding solution blows up in finite time.
Moreover, it may be interesting to study the existence of global rough solutions when the condition
\eqref{small condition} on initial data is
relaxed.

This paper is organized as follow. In Section 2, we present the gauge transformation and prove the virial identities of DNLS. In Section 3, we
discuss the difference among the DNLS, the quintic NLS and the quintic gKdV equations. In Section 4, we study the initial boundary value problem of the DNLS on the half line and give the proof of Theorem \ref{thm:main2}. In Section 5, we
prove Theorem \ref{thm:main1}.

\vspace{0.5cm}
\section{Gauge transformations, Virial identities}

\subsection{Gauge transformations}
The gauge transformation is an important and very nice tool to study the nonlinear Schr\"oldinger equation with derivative (see Hayashi and Ozawa \cite{Ha-93-DNLS, HaOz-92-DNLS,
HaOz-94-DNLS}). It gives some improvement of the nonlinearity. In this subsection, we present the various gauge transformations and their properties. See, for examples \cite{CKSTT-01-DNLS, Oz-96-DNLS} for more details. We define
$$
\mathcal{G}_a u(t,x)=e^{ia\int_{-\infty}^x |u(t,y)|^2\,dy}u(t,x).
$$
Then
$
\mathcal G_a\mathcal G_{-a}=Id,
$
the identity transform. For any function $f$,
\begin{equation}\label{eqs:Gax}
\partial_x\mathcal{G}_a f=e^{ia\int_{-\infty}^x |f(t,y)|^2\,dy}\big(ia|f|^2f+f_x\big).
\end{equation}
Further, we have
\begin{lem} \label{lem:Ga}
If $u$ is the solution of \eqref{eqs:DNLS} (where $\lambda=1$), then $v=\mathcal{G}_a u$ is the solution of the equation,
$$
i\partial_{t}v+\partial_{x}^{2}v-i2(a+1)|v|^2v_x-i(2a+1)v^2\bar v_x+\frac12a(2a+1)|v|^4v=0.
$$
Moreover,
$$
E_D(u)=\big\|\partial_x\mathcal G_a u\big\|_2^2+\big(2a+\frac32\big)Im\int_{\R}|\mathcal G_a u|^2\mathcal G_a u\cdot\partial_x\overline{\mathcal G_a u}\,dx+\big(a^2+\frac32a+\frac12\big)\int_{\R}|\mathcal G_a u|^6\,dx.
$$
\end{lem}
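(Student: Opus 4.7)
The plan is a direct computation via the chain rule, with only one nontrivial ingredient: the local mass conservation law for the DNLS. Write $\phi(t,x) = a\int_{-\infty}^x |u(t,y)|^2\,dy$, so that $v = \mathcal{G}_a u = e^{i\phi}u$ and $|v|=|u|$. Both claims then reduce to assembling $\phi_x$ and $\phi_t$, substituting into the chain-rule expressions for $v_x$, $v_{xx}$, $v_t$, and translating between the $u$- and $v$-variables using \eqref{eqs:Gax}.

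For the PDE, $\phi_x = a|u|^2$ is immediate. To get $\phi_t$, rewrite \eqref{eqs:DNLS} (with $\lambda=1$) as $u_t = iu_{xx} + \partial_x(|u|^2 u)$ and combine with its complex conjugate to derive the local mass conservation law
\[
\partial_t|u|^2 = \partial_x\!\Bigl[i(u_x\bar u - u\bar u_x) + \tfrac{3}{2}|u|^4\Bigr],
\]
where the linear part is an exact derivative and the cubic-derivative nonlinearity contributes $3|u|^2\partial_x|u|^2 = \tfrac{3}{2}\partial_x|u|^4$. Integrating from $-\infty$ (boundary contributions vanish for $H^1$ data) gives $\phi_t = a[i(u_x\bar u - u\bar u_x) + \tfrac{3}{2}|u|^4]$. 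The chain rule then yields
\[
v_x = e^{i\phi}(ia|u|^2 u + u_x), \qquad v_{xx} = e^{i\phi}\bigl[-a^2|u|^4 u + 3ia|u|^2 u_x + iau^2\bar u_x + u_{xx}\bigr],
\]
and a parallel expansion of $iv_t$ cancels the $\pm u_{xx}$ contributions when added, leaving
\[
iv_t + v_{xx} = e^{i\phi}\Bigl[2i(a+1)|u|^2 u_x + i(2a+1)u^2\bar u_x - \tfrac{a(2a+3)}{2}|u|^4 u\Bigr].
\]
Using $|v|^2 v_x = e^{i\phi}(ia|u|^4 u + |u|^2 u_x)$, $v^2\bar v_x = e^{i\phi}(-ia|u|^4 u + u^2\bar u_x)$, and $|v|^4 v = e^{i\phi}|u|^4 u$, a matching of coefficients rewrites the right-hand side as $2i(a+1)|v|^2 v_x + i(2a+1)v^2\bar v_x - \tfrac{1}{2}a(2a+1)|v|^4 v$, which is the claimed equation.

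For the energy identity, squaring the expression for $v_x$ yields
\[
|v_x|^2 = |u_x|^2 + 2a|u|^2\,\text{Im}(\bar u u_x) + a^2|u|^6,
\]
while $v\bar v_x = -ia|u|^4 + u\bar u_x$ gives
\[
\text{Im}\bigl(|v|^2 v\bar v_x\bigr) = -a|u|^6 + |u|^2\,\text{Im}(u\bar u_x).
\]
Substituting these into the right-hand side of the asserted identity and using $|v|^6 = |u|^6$ together with $\text{Im}(\bar u u_x) = -\text{Im}(u\bar u_x)$, the coefficients of $|u_x|^2$, $|u|^2\,\text{Im}(u\bar u_x)$, and $|u|^6$ collapse to $1$, $\tfrac{3}{2}$, $\tfrac{1}{2}$, reproducing $E_D(u)$ from \eqref{energy-law}.

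There is no serious obstacle; the argument is entirely algebraic bookkeeping. The two places requiring care are (a) the derivation of the local mass conservation law, where the cubic-derivative nonlinearity from the equation and its conjugate must be combined correctly to recognize $\tfrac{3}{2}\partial_x|u|^4$, and (b) the sign relation $\text{Im}(\bar u u_x) = -\text{Im}(u\bar u_x)$, which must be tracked consistently when passing between the $u$- and $v$-forms of the computation.
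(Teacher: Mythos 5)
Your computation is correct: the local mass conservation law $\partial_t|u|^2=\partial_x[i(u_x\bar u-u\bar u_x)+\tfrac32|u|^4]$ is derived properly, the expansions of $v_x$, $v_{xx}$, $iv_t$ and the translation back to the $v$-variables all check out, and the coefficient matching for both the PDE and the energy identity is accurate. The paper explicitly omits this proof as ``a direct computation,'' and your argument is precisely that computation carried out in full, so it matches the intended approach.
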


The proof of this lemma follows from a direct computation and is omitted.

To understand how the gauge transform improves the nonlinearity in the present form \eqref{eqs:DNLS}, we introduce the following two transforms used in \cite{HaOz-94-DNLS, Oz-96-DNLS}. Let
$$
\phi=\mathcal G_{-1}u;\quad \psi=\mathcal G_{\frac12}\partial_x \mathcal G_{-\frac12}u,
$$
then $(\phi,\psi)$ solves the following system of nonlinear Schr\"odinger equation,
 \begin{equation}\label{syt:DNLS}
   \left\{ \aligned
    &i\partial_{t}\phi+\partial_{x}^{2}\phi=-i\phi^2\bar\psi,
    \\
    &i\partial_{t}\psi+\partial_{x}^{2}\psi=\psi^2\bar\phi.
   \endaligned
  \right.
 \end{equation}
Compared with the original equation \eqref{eqs:DNLS}, the system above has no loss of derivatives. Thus it is much more convenient to get the local solvability of \eqref{eqs:DNLS} for suitable smooth data by considering the system \eqref{syt:DNLS} instead.

As mentioned above, it is convenient to consider $v=\mathcal G_{-\frac34} u$. Then by Lemma \ref{lem:Ga}, the equation \eqref{eqs:DNLS} of $u$ reduces to \eqref{eqs:DNLS-under-=gauge1},
that is,
\begin{equation*}
     i\partial_{t}v+\partial_{x}^{2}v=\frac i2|v|^2v_x-\frac i2v^2\bar{v}_x-\frac
     3{16}|v|^4v.
\end{equation*}
Moreover, the energy $E_D(u)$ in \eqref{energy-law} is changed into $E(v)$ in \eqref{energy-43}.
In the sequel we shall consider \eqref{eqs:DNLS-under-=gauge1} and the energy \eqref{energy-43} of $v$ instead.

\subsection{Virial identities}
In this subsection, we discuss some virial identities for the nonlinear Schr\"odinger equation with derivative. Formally one may find that the virial quantity of $v$ is similar to that of mass-critical nonlinear Schr\"odinger equation. However, it is in fact the difference that gives the different conclusions of these two equations. Let $\psi=\psi(x)$ be a smooth real function.
Define
\begin{align}
I(t)&=\int_{\R}\psi |v(t)|^2\,dx;\\
J(t)&=2\mbox{Im}\int_{\R}\psi \bar v(t) v_x(t)\,dx+\frac 12\int
\psi|v(t)|^4\,dx.
\end{align}
\begin{lem}\label{virial-identity}
Let $v$ be the solution of \eqref{eqs:DNLS-under-=gauge1} with
$v(0)=v_0\in \H1$, and let $\psi\in C^3$. Then
\begin{align}
I'(t)&=2\mbox{Im}\int_{\R}\psi'\bar v(t) v_x(t)\,dx;\label{eqs:I'}\\
J'(t)&=4\int_{\R}\psi'\big(|v_x(t)|^2-\frac
1{16}|v(t)|^6\big)\,dx-\int_{\R}\psi'''|v(t)|^2\,dx.\label{eqs:J'}
\end{align}
\end{lem}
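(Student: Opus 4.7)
The plan is to derive both identities from local (pointwise) conservation laws for the equation \eqref{eqs:DNLS-under-=gauge1}, then test against $\psi$ and integrate by parts. The hands-on work is mostly in the momentum identity \eqref{eqs:J'}.

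First, for \eqref{eqs:I'}, I would establish the mass conservation law at the density level:
\begin{equation*}
\partial_t |v|^2 = -2\,\partial_x\,\mbox{Im}(\bar v v_x).
\end{equation*}
To get it, multiply \eqref{eqs:DNLS-under-=gauge1} by $\bar v$, subtract the conjugate, and observe that the contributions from the nonlinear terms $\tfrac{i}{2}|v|^2 v_x - \tfrac{i}{2}v^2\bar v_x$ and $-\tfrac{3}{16}|v|^4 v$ cancel identically (they contribute real multiples of $|v|^2\partial_x|v|^2$ and of $|v|^6$, both of which vanish on taking the imaginary part). Then $I'(t)=\int\psi\,\partial_t|v|^2\,dx$ and one integration by parts gives \eqref{eqs:I'}.

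The main step is to prove the local momentum identity
\begin{equation*}
\partial_t\Bigl(2\,\mbox{Im}(\bar v v_x) + \tfrac{1}{2}|v|^4\Bigr) \;=\; \partial_x\Bigl[-4|v_x|^2 + \partial_x^2|v|^2 + \tfrac{1}{4}|v|^6\Bigr].
\end{equation*}
To derive it, write $\mu:=\mbox{Im}(\bar v v_x)=\tfrac{1}{2i}(\bar v v_x - v\bar v_x)$ and compute $\partial_t(\bar v v_x)$ using \eqref{eqs:DNLS-under-=gauge1} and its conjugate. The $v_{xx}$ contributions yield $-i\partial_x|v_x|^2 + \partial_x(i\bar v v_{xx})$, whose imaginary part is $-2\partial_x|v_x|^2 + \tfrac12\partial_x^3|v|^2$ after using the identity $2\,\mbox{Re}(\bar v v_{xx})=\partial_x^2|v|^2-2|v_x|^2$. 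The derivative-nonlinear terms $\tfrac{i}{2}|v|^2 v_x-\tfrac{i}{2}v^2\bar v_x$ contribute, on taking imaginary parts, $\partial_x(|v|^2\mu)-\mu\,\partial_x|v|^2=|v|^2\partial_x\mu$, plus a $-i\mu\,\partial_x|v|^2$ piece from $-\tfrac12(\bar v^2 v_x^2 - v^2\bar v_x^2)=-i\mu\,\partial_x|v|^2$. The $|v|^4 v$ term contributes $\tfrac{3}{16}\partial_x|v|^6-\tfrac{3}{16}|v|^4\partial_x|v|^2=\tfrac{1}{8}\partial_x|v|^6$. Collecting everything:
\begin{equation*}
\partial_t\mu \;=\; -2\partial_x|v_x|^2 + \tfrac12\partial_x^3|v|^2 + |v|^2\partial_x\mu + \tfrac{1}{8}\partial_x|v|^6.
\end{equation*}
Note that $|v|^2\partial_x\mu$ is not by itself a divergence; however, using the mass law from the warm-up, $\tfrac12\partial_t|v|^4 = |v|^2\partial_t|v|^2 = -2|v|^2\partial_x\mu$, so the combination $2\partial_t\mu+\tfrac12\partial_t|v|^4$ is exactly a pure $x$-derivative, giving the claimed local momentum identity.

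With this conservation law in hand, \eqref{eqs:J'} follows by multiplying by $\psi$ and integrating by parts: the $|v_x|^2$ and $|v|^6$ pieces each give a factor of $\psi'$, and the $\partial_x^2|v|^2$ flux yields $-\int\psi''' |v|^2\,dx$ after two additional integrations by parts. As a consistency check, taking $\psi\equiv 1$ reproduces the conservation of the gauge-transformed momentum. The main obstacle is purely bookkeeping, namely tracking the numerous derivative-nonlinear contributions to $\partial_t\mu$ and verifying that the obstruction $|v|^2\partial_x\mu$ is precisely compensated by $\tfrac12\partial_t|v|^4$; once this cancellation is seen, the rest is routine.
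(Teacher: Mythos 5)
Your proof is correct; I checked the local momentum identity $\partial_t\bigl(2\,\mathrm{Im}(\bar v v_x)+\tfrac12|v|^4\bigr)=\partial_x\bigl[-4|v_x|^2+\partial_x^2|v|^2+\tfrac14|v|^6\bigr]$ term by term against \eqref{eqs:DNLS-under-=gauge1} (the $v_{xx}$ part gives $-2\partial_x|v_x|^2+\tfrac12\partial_x^3|v|^2$, the derivative nonlinearity gives $|v|^2\partial_x\mu$, the quintic term gives $\tfrac18\partial_x|v|^6$, and the mass law indeed converts $2|v|^2\partial_x\mu$ into $-\tfrac12\partial_t|v|^4$), and testing against $\psi$ reproduces \eqref{eqs:I'} and \eqref{eqs:J'} exactly. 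Your route is, however, genuinely different from the paper's. The paper never differentiates the $v$-equation directly: it applies a further gauge transform $w=\mathcal{G}_{1/4}v$, under which $w$ solves the cleaner equation $iw_t+w_{xx}=i|w|^2w_x$ and, crucially, the two functionals collapse to $I(t)=\int_{\R}\psi|w|^2\,dx$ and $J(t)=2\,\mathrm{Im}\int_{\R}\psi\bar w w_x\,dx$ --- the quartic correction $\tfrac12\int\psi|v|^4\,dx$ is absorbed into the change of variables. The computation is then carried out for $w$ and translated back to $v$ using $w_x=e^{\frac{i}{4}\int_{-\infty}^x|v|^2dy}\bigl(\tfrac{i}{4}|v|^2v+v_x\bigr)$. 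What the paper's approach buys is that the non-divergence obstruction you had to fight never appears: the momentum density of $w$ already satisfies a clean local conservation law. What your approach buys is a conceptual explanation of \emph{why} $J$ must carry the quartic term: the obstruction $|v|^2\partial_x\mu$ in $\partial_t\mu$ is exactly cancelled by $\tfrac12\partial_t|v|^4$, which is the same structural fact the gauge transform encodes implicitly. Your presentation of the derivative-nonlinear bookkeeping (the ``$-i\mu\,\partial_x|v|^2$ piece'') mixes the pre- and post-imaginary-part stages and should be cleaned up in a final write-up, but the stated intermediate identity for $\partial_t\mu$ and the final result are both right.
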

\begin{proof}
Employing the gauge transform
$$
w(t,x):=\mathcal G_{-\frac12}u(t,x)=\mathcal G_{\frac14}v(t,x),
$$
then by Lemma \ref{lem:Ga},  $w$ obeys the equation
$$
iw_t+w_{xx}=i|w|^2w_x.
$$
Moreover, since $v(t,x)=\mathcal G_{-\frac14}w(t,x)$, by \eqref{eqs:Gax},
$$
\partial_xv(t,x)=e^{-i\frac14\int_{-\infty}^x |w(t,y)|^2\,dy}\big(-\frac14 i|w|^2w+w_x\big).
$$
Thus, we have
\begin{align*}
I(t)=\int_{\R}\psi |w(t)|^2\,dx
\quad \mbox{and} \quad
J(t)=2\mbox{Im}\int_{\R}\psi \bar w(t) w_x(t)\,dx.
\end{align*}
Now by a direct computation, we get
\begin{align}
I'(t)&=2\mbox{Re}\int_{\R}\psi\bar w(t,x) \partial_tw(t,x)\,dx=2\mbox{Re}\int_{\R}\psi\bar w \big(i w_{xx}+|w|^2w_x\big)\,dx\notag\\
&=2\mbox{Im}\int_{\R}\psi'\bar w w_{x}\,dx-\frac12\int_{\R} \psi' |w|^4\,dx.\label{3710.18}
\end{align}
Applying \eqref{eqs:Gax} again,
\begin{align}\label{3710.20}
\partial_xw(t,x)=e^{\frac14i\int_{-\infty}^x |v(t,y)|^2\,dy}\big(\frac14 i|v|^2v+v_x\big).
\end{align}
This together with \eqref{3710.18} gives \eqref{eqs:I'}.
Now we turn to \eqref{eqs:J'}. For this, we get
\begin{align}
J'(t)&=2\mbox{Im}\int_{\R}\psi  \bar w_t(t,x)  w_x(t,x)\,dx+2\mbox{Im}\int_{\R}\psi\bar w(t,x) w_{xt}(t,x)\,dx\notag\\
&=-4\mbox{Im}\int_{\R}\psi   w_t \bar w_x\,dx-2\mbox{Im}\int_{\R}\psi'\bar w w_{t}\,dx\notag\\
&=-4\mbox{Im}\int_{\R}\psi \bar w_x  (iw_{xx}+|w|^2w_x) \,dx-2\mbox{Im}\int_{\R}\psi'\bar w (iw_{xx}+|w|^2w_x) \,dx\notag\\
&=-4\mbox{Re}\int_{\R}\psi \bar w_x  w_{xx} \,dx-2\mbox{Re}\int_{\R}\psi'\bar w w_{xx} \,dx-2\mbox{Im}\int_{\R}\psi' |w|^2\bar w w_x\,dx\notag\\
&=4\int_{\R}\psi'|w_x|^2\,dx+2\mbox{Re}\int_{\R}\psi''\bar w w_x\,dx-2\mbox{Im}\int_{\R}\psi' |w|^2\bar w w_x\,dx\notag\\
&=4\int_{\R}\psi'|w_x|^2\,dx-\int_{\R}\psi'''| w|^2\,dx-2\mbox{Im}\int_{\R}\psi' |w|^2\bar w w_x\,dx.\label{3710.19}
\end{align}
Now using \eqref{3710.20}, we have
$$
|w_x|^2=|v_x|^2+\frac12\mbox{Im}\big(|v|^2\bar v v_x\big)+\frac1{16}|v|^6;
$$
and
$$
|w|^2=|v|^2;\quad \mbox{Im}\big(|w|^2\bar w w_x\big)=\mbox{Im}\big(|v|^2\bar v v_x\big)+\frac14|v|^6.
$$
These insert into \eqref{3710.19} and we obtain \eqref{eqs:J'}.
\end{proof}

\section{A comparison between DNLS, NLS-5 and gKdV-5}

In this section, we discuss the nonlinear Schr\"odinger equation with derivative \eqref{eqs:DNLS-under-=gauge1},
the focusing, quintic nonlinear
Schr\"{o}dinger equation (NLS-5) which reads as
\begin{equation}\label{quintic-NLS}
i\partial_{t}u+\partial_{x}^{2}u+\frac3{16} |u|^4u=0,
\end{equation}
and the focusing, quintic generalized Korteweg-de Vries equation (gKdV-5),
\begin{equation}\label{gKdV-5}
    \partial_t u +  \partial^3_{x} u +\frac3{16}  \partial_x(u^5)=0.
 \end{equation}
The first two equations have the same standing wave solutions of $e^{it}Q$, and the last one has a traveling wave solution $Q(x-t)$.
These three equations have the same energies in the form of \eqref{energy-43}. So by the sharp Gagliardo-Nirenberg inequality, all of them are global well-posdness in $\H1$ when the initial data $\|u_0\|_{L^2}<\|Q\|_{L^2}=\sqrt {2\pi}$.

Now we continue to discuss the difference between the first equation (DNLS) and the last two (NLS-5, gKdV-5).

First of all, we give some products from Lemma \ref{virial-identity}. We always assume that
$v$ is smooth enough.
Taking $\psi=x, x^2$ respectively,  then by
\eqref{eqs:I'}, we have
$$
\frac{d}{dt}\int_{\R}x |v(t)|^2\,dx=2\mbox{Im}\int_{\R}\bar v(t) v_x(t)\,dx;
$$
and
\begin{equation}\label{virial-1}
\frac{d}{dt}\int_{\R}x^2 |v(t)|^2\,dx=4\mbox{Im}\int_{\R}x\bar v(t) v_x(t)\,dx,
\end{equation}
respectively. 
Note that these two identities resemble to the corresponding one of the mass-critical nonlinear
Schr\"{o}dinger equation \eqref{quintic-NLS}.

Now we take $\psi=1$ in \eqref{eqs:J'}, it gives the momentum conservation law,
\begin{equation}\label{mom-law}
P(v(t)):=\mbox{Im} \int_{\R}\bar v(t) v_x(t)\,dx+\frac 14\int_{\R}
|v(t)|^4\,dx=P(v_0).
\end{equation}
Then taking $\psi=x$, we have
\begin{equation}\label{virial-2}
\frac{d}{dt}\Big[2\mbox{Im}\int_{\R}x \bar v(t) v_x(t)\,dx+\frac 12\int_{\R}
x|v(t)|^4\,dx\Big]=4E(v_0).
\end{equation}
This equality is different from the situation of the mass-critical nonlinear
Schr\"{o}dinger equation \eqref{quintic-NLS}.
More precisely, for the solution $u$ of \eqref{quintic-NLS} with the initial data $u_0$,  we have
\begin{equation}\label{virial-2-NLS}
\frac{d}{dt}\Big[2\mbox{Im}\int_{\R}x \bar u(t) u_x(t)\,dx\Big]=4E(u_0).
\end{equation}
Compared with the identity \eqref{virial-2-NLS}, there is an additional term $\frac 12\int
x|v(t)|^4\,dx$ in \eqref{virial-2}. Indeed, for the solution of \eqref{quintic-NLS}, combining with the same identity in \eqref{virial-1}, one has
\begin{equation}\label{variance}
\frac{d^2}{dt^2}\int_{\R}x^2 |u(t)|^2\,dx=8E(u_0).
\end{equation}
But this does not hold for the solution of \eqref{eqs:DNLS-under-=gauge1}. The ``surplus'' term $\frac 12\int
x|v(t)|^4\,dx$ in \eqref{virial-2} breaks the convexity of the
variance. It is precisely this difference that leads to the distinct phenomena of the solutions of these two equations, at least at the technical level.

Using the virial identity \eqref{variance},  Glassey
\cite{Glassey77} proved that the solution $u$ of the mass-critical nonlinear
Schr\"{o}dinger equation
\begin{equation*}
   \aligned
    \partial_t u +  \Delta u +|u|^{\frac 4N}u=0, \quad (t,x)\in \R\times \R^N. \\
   \endaligned
 \end{equation*}
blows up in finite time when $u_0\in H^1(\R^N), xu_0\in L^2(\R^N)$ and $E(u_0)<0$. Further,
in the 1D case, Ogawa and Tsutsumi \cite{OgTs92} proved that the solutions of \eqref{quintic-NLS}
blow up in finite time when $u_0\in \H1$ and $E(u_0)<0$. See also \cite{DuWuZh-NLS, GlMe-95, HoRo2, Na99},  where all the solutions of the nonlinear Schr\"odinger equations with power nonlinearity blow up in finite time or infinite time if their energies are negative.
However, Theorem
\ref{thm:main1} depicts a different scene, where there exist global  and uniformly bounded solutions
even if $E(v_0)<0$.

The situation is also different from the mass-critical generalized
KdV equation \eqref{gKdV-5}. The latter has also virial identity
$$
\frac{d}{dt}\int_{\R}(x+t) |u(t)|^2\,dx=\int_{\R}u^2\,dx-3\int_{\R}|u_x|^2\,dx-\frac13\int_\R|u|^6\,dx.
$$
The blow-up of the solutions to \eqref{gKdV-5} also occurs when the initial data $u_0$ satisfies
$E(u_0)<0$, \eqref{small condition-further}, and some decay
conditions, see \cite{MaMe-2002-gKdV, Me-2001-gKdV}.

\section{Blow-up for the DNLS on the half line}

In this section, we use the virial identities obtained in Subsection 2.2 to study the blow-up solutions for the nonlinear Schr\"odinger equation with derivative on the half line. Consider the problem \eqref{DNLS-halfline}, and set 
\begin{equation*}
v(t,x)=e^{-i\frac34\int_{0}^x |u(t,y)|^2\,dy}u(t,x),
\end{equation*}
Using this gauge transformation, we see that $v$ is the solution of
\begin{equation}\label{8.19}
   \left\{ \aligned
    &i\partial_{t}v+\partial_{x}^{2}v=\frac i2|v|^2v_x-\frac i2v^2\bar{v}_x-\frac
     3{16}|v|^4v,\qquad t\in \R, x\in (0,+\infty),
    \\
    &v(0,x)  =v_0(x),
    \\
    &v(t,0)  =0.
   \endaligned
  \right.
 \end{equation}
Note that after replacing the integral domain $\R$ by $\R^+$, the energy conservation law and all of the virial identities obtained in Subsection 2.2 also hold true for $v$.

Now using the virial identities and Glassey's argument \cite{Glassey77}, we give the proof of Theorem \ref{thm:main2}.

\begin{proof}[Proof of Theorem \ref{thm:main2}]
Let $v$ be the solution to \eqref{8.19}.
Denote
$$
I(t)=\int_0^\infty x^2|v(t,x)|^2\,dx.
$$
Then, by the analogous identity as \eqref{virial-1}, we have
\begin{align*}
I'(t)=&4\mbox{Im}\int_0^\infty x\bar v(t) v_x(t)\,dx\\
=&2\Big[2\mbox{Im}\int_0^\infty x \bar v(t) v_x(t)\,dx+\frac 12\int_0^\infty\!\!\! x|v(t)|^4\,dx\Big]- \int_0^\infty\!\!\! x|v(t)|^4\,dx.
\end{align*}
Now, by the analogous identity as \eqref{virial-2}, we get
$$
\frac{d}{dt}\Big[2\mbox{Im}\int_0^\infty x \bar v(t) v_x(t)\,dx+\frac 12\int_0^\infty
x|v(t)|^4\,dx\Big]=4E(v_0).
$$
Therefore, using these two identities, we obtain
\begin{align*}
I''(t)=&8E(v_0)- \frac{d}{dt}\int_0^\infty\!\!\! x|v(t)|^4\,dx.
\end{align*}
Integrating in time twice, we have
\begin{align}
I(t)=&I(0)+I'(0)t+\int_0^{t}\!\!\int_0^s I''(\tau)\,d\tau ds\notag\\
=&I(0)+I'(0)t+\int_0^{t}\!\!\int_0^s\Big(8E(v_0)- \frac{d}{d\tau}\int_0^\infty\!\!\! x|v(\tau)|^4\,dx\Big)\,d\tau ds\notag\\
=&4E(u_0)t^2+\Big(I'(0)+\int_0^\infty\!\!\! x|v_0|^4\,dx\Big)t+I(0)-\int_0^{t}\!\!\int_0^\infty\!\!\! x|v(s)|^4\,dx ds\notag\\
\le &4E(u_0)t^2+\Big(I'(0)+\int_0^\infty\!\!\! x|v_0|^4\,dx\Big)t+I(0).\label{eqs:38}
\end{align}
Since $E(v_0)=E_D(u_0)<0$, there exists a finite time $T_*>0$ such that $I(T_*)=0,$
$$
I(t)>0, \,\,0<t<T_*,
$$
and
$$
I(t)=O(T_*-t),  \mbox{ as } t\to T_*-.
$$
Note that
\begin{align*}
\int_0^\infty |v_0(x)|^2\,dx&=\int_0^\infty |v(t,x)|^2\,dx=-2\mbox{Re} \int_0^\infty xv(t,x) \overline{v_x}(t,x)\,dx\\
&\le 2 \|xv(t,x)\|_{L^2_x(\R^+)}\|v_x(t,x)\|_{L^2_x(\R^+)}=2\sqrt{I(t)}\>\|v_x(t,\cdot)\|_{L^2(\R^+)}.
\end{align*}
Then there is a constant $C=C(v_0)>0$, such that
\begin{align}\label{Expo-v}
\|v_x(t,\cdot)\|_{L^2(\R^+)}\ge \frac{\int_0^\infty |v_0(x)|^2\,dx}{2\sqrt{I(t)}}
\ge \frac{C}{\sqrt{T_*-t}} \to \infty,
\end{align}
and the right-hand side goes to $\infty$ as $ t\to  T^*-.$
Therefore, $v(t)$ blows up at time $T_*<+\infty$. Since
$$
v_x=e^{-i\frac34 \int_{0}^x |u(t,y)|^2\,dy}\big(-i\frac34|u|^2u+u_x\big),
$$
by Gagliardo-Nirenberg inequality and mass conservation law, there exists $C=C(u_0)$ such that
\begin{align*}
\|v_x(t,\cdot)\|_{L^2(\R^+)}\le & \|u_x(t,\cdot)\|_{L^2(\R^+)}+\frac34\|u(t,\cdot)\|_{L^6(\R^+)}^3\le  C\|u_x(t,\cdot)\|_{L^2(\R^+)}.
\end{align*}
Thus by \eqref{Expo-v}, this gives the analogous estimate on $u$. 
\end{proof}

One may note from the proof that the key ingredient to obtain the blow-up result of initial boundary value problem on the half-line case is the positivity of the ``surplus'' term $\int_0^\infty\!\! x|v(t)|^4\,dx$. This is not true for the Cauchy problem.

\vspace{0.5cm}
\section{Proof of the  Theorem \ref{thm:main1}}
Let $(-T_-(u_0),T_+(u_0))$ be the maximal lifespan of the solution $u$ of \eqref{eqs:DNLS}. To prove  Theorem \ref{thm:main1}, it is  sufficient to
obtain the (indeed uniformly)\emph{ a priori} estimate of the
solutions on $H^1$-norm, that is,
$$
\sup\limits_{t\in (-T_-(u_0),T_+(u_0))}\|v_x(t)\|_{L^2}< +\infty.
$$
Now we argue by contradiction and suppose that there exists a
sequence $\{t_n\}$ with
$$
t_n\to -T_-(u_0),\quad \mbox{ or } \quad T_+(u_0),
$$
such that
\begin{equation}\label{infty-sequence}
\|v_x(t_n)\|_{L^2}\to +\infty,   \mbox{ as  } n\to\infty.
\end{equation}
Let
\begin{equation}\label{def_lambda_n}
\lambda_n=\|Q_x\|_{L^2}/\| v_x(t_n)\|_{L^2},
\end{equation}
and
\begin{equation}\label{eqs:scaling-wn}
w_n(x)=\lambda_n^{\frac 12}v(t_n, \lambda_nx).
\end{equation}

Then by \eqref{infty-sequence},
\begin{equation*}
\|\partial_x w_n\|_{L^2}=\|Q_x\|_{L^2},\quad \mbox{ and }
\quad\lambda_n\to 0, \mbox{ as } n\to \infty.
\end{equation*}
First we have the following lemma.

\begin{lem}\label{lem:var-claim} For any $\varepsilon>0$, there exists a small
$\varepsilon_*=\varepsilon_*(\varepsilon)>0$, such that if the
function $f\in \H1$ satisfies
$$
\int_{\R}|f(x)|^2\,dx <{2\pi}+\varepsilon_*,\quad \|\partial_x
f\|_{L^2}=\|\partial_xQ\|_{L^2},\quad E(f)<\varepsilon_*,
$$
then there exist $\gamma_0, x_0\in \R$, such that
$$
\|f-e^{-i\gamma_0}Q(\cdot-x_0)\|_{H^1}\le \varepsilon.
$$
\end{lem}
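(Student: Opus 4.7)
My plan is to proceed by contradiction: fix $\varepsilon > 0$, take $\varepsilon_n \downarrow 0$ and functions $f_n \in H^1(\R)$ satisfying the three hypotheses with $\varepsilon_* = \varepsilon_n$, yet with $\inf_{\gamma, y \in \R}\|f_n - e^{-i\gamma}Q(\cdot - y)\|_{H^1} \ge \varepsilon$, and then produce a subsequence converging (after translation and phase rotation) to $Q$ in $H^1$. The sequence $\{f_n\}$ is uniformly bounded in $H^1$, and inserting the normalization $\|\partial_x f_n\|_{L^2}^2 = \|\partial_x Q\|_{L^2}^2$ and the hypothesis on $\|f_n\|_{L^2}^2$ into \eqref{sharp Gagliardo-Nirenberg} gives
$$
\|\partial_x Q\|_{L^2}^2 - \varepsilon_n < \tfrac{1}{16}\|f_n\|_{L^6}^6 \le \tfrac{(2\pi+\varepsilon_n)^2}{4\pi^2}\|\partial_x Q\|_{L^2}^2,
$$
where the left bound comes from $E(f_n) < \varepsilon_n$. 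Letting $n \to \infty$ forces $\|f_n\|_{L^2}^2 \to 2\pi$ and $\tfrac{1}{16}\|f_n\|_{L^6}^6 \to \|\partial_x Q\|_{L^2}^2$, so $\{f_n\}$ asymptotically saturates the sharp Gagliardo--Nirenberg inequality.

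\emph{Extraction of a profile.} Because $\|f_n\|_{L^6}$ is bounded below, Lions' concentration-compactness principle applied to the mass density $|f_n|^2$ rules out the vanishing alternative and produces $x_n \in \R$ and $R, \eta > 0$ with $\int_{|x-x_n|<R}|f_n|^2\,dx \ge \eta$. Setting $g_n(x) := f_n(x+x_n)$, a subsequence converges weakly in $H^1$ and a.e.\ to some $g \not\equiv 0$. The main obstacle is upgrading this to strong $H^1$ convergence; I would do so via a Brezis--Lieb splitting. Write $a := \|g\|_{L^2}^2 > 0$, $c := \|\partial_x g\|_{L^2}^2$, $b := 2\pi - a$, and $d := \|\partial_x Q\|_{L^2}^2 - c$; then Brezis--Lieb in $L^2$, $\dot H^1$, and $L^6$ yields $\lim\|g_n - g\|_{L^2}^2 = b$, $\lim\|\partial_x(g_n - g)\|_{L^2}^2 = d$, and $\lim(\|g\|_{L^6}^6 + \|g_n - g\|_{L^6}^6) = 16\|\partial_x Q\|_{L^2}^2$. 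Applying \eqref{sharp Gagliardo-Nirenberg} separately to $g$ and to $g_n - g$, summing, and using $\tfrac{4}{\pi^2}(2\pi)^2 = 16$, one obtains
$$
(a+b)^2(c+d) \le a^2 c + b^2 d,
$$
which expands to $a^2 d + 2ab(c+d) + b^2 c \le 0$. Nonnegativity of each term and $a > 0$ force $b = d = 0$, hence $g_n \to g$ strongly in $H^1$.

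\emph{Identification and conclusion.} The limit $g$ then realizes equality in \eqref{sharp Gagliardo-Nirenberg}, so by Weinstein's classification of extremizers \cite{W} one has $g = e^{-i\gamma_0}Q(\cdot - y_0)$ for some $\gamma_0, y_0 \in \R$. Translating back gives $\|f_n - e^{-i\gamma_0}Q(\cdot - (y_0 - x_n))\|_{H^1} \to 0$, contradicting the standing assumption and proving the lemma. The only delicate ingredient is the Brezis--Lieb plus sharp Gagliardo--Nirenberg bookkeeping above, which replaces and streamlines the usual dichotomy alternative in Lions' framework.
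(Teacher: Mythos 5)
Your argument is correct, but it follows a different route from the paper. The paper runs the same compactness scheme through G\'erard's profile decomposition: it writes $f_n=\sum_{j\le L}V^j(\cdot-x_n^j)+R_n^L$, uses the asymptotic additivity of the $L^2$, $\dot H^1$ norms and of the energy across profiles together with the sharp Gagliardo--Nirenberg inequality to show every $E(V^j)\ge 0$, concludes that exactly one profile survives and saturates the inequality, and then invokes Weinstein's uniqueness of extremizers. You instead extract a single nontrivial weak limit by ruling out Lions' vanishing alternative (using the lower bound on $\|f_n\|_{L^6}$), and handle the possible loss of compactness directly via Brezis--Lieb splitting plus the strict superadditivity computation $(a+b)^2(c+d)\le a^2c+b^2d\Rightarrow a^2d+2ab(c+d)+b^2c\le 0$. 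The two approaches are morally equivalent --- your Brezis--Lieb bookkeeping is exactly what the profile decomposition packages into \eqref{Min-Norm-H1}--\eqref{Sum-Norm-H1} --- but yours is more elementary (no need for the full iterated profile extraction) at the cost of doing the orthogonality estimates by hand; the paper's version scales more easily to situations with many profiles. Your preliminary step deriving $\|f_n\|_{L^2}^2\to 2\pi$ from the hypotheses is needed and is only implicit in the paper. Two cosmetic points: to conclude $b=0$ you also need $c>0$, which follows since $d=0$ forces $c=\|\partial_xQ\|_{L^2}^2>0$ (a nonzero $H^1(\R)$ function cannot have vanishing derivative); and the final translate should be $Q(\cdot-(x_n+y_0))$ rather than $Q(\cdot-(y_0-x_n))$ with your convention $g_n=f_n(\cdot+x_n)$. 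Neither affects the validity of the proof.
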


We put the  proof of this lemma \ref{lem:var-claim} at the end of this section and apply it to prove Theorem \ref{thm:main1}.
Let $\varepsilon_0>0$ be a fixed small constant which will be
decided later, and let
$\varepsilon_*=\varepsilon_*(\varepsilon_0)>0$ be the number defined in Lemma
\ref{lem:var-claim}. By \eqref{small condition-further},
\eqref{eqs:scaling-wn} and a simple computation,
$$
\int_{\R}|w_n(x)|^2\,dx=\int_{\R}|v_0(x)|^2\,dx <{2\pi}+\varepsilon_*,
$$
and
$$
\|\partial_x w_n\|_{L^2}=\|Q_x\|_{L^2},\quad E(w_n)=\lambda_n^2
E(v_0)\to 0.
$$
Then, by Lemma \ref{lem:var-claim}, we may inductively
construct the sequences $\{\gamma_n\}, \{x_n\}$ which satisfy
\begin{equation}\label{variantial-structure}
\|w_n-e^{-i\gamma_n}Q(\cdot-x_n)\|_{H^1}\le \varepsilon_0 \quad \mbox{for any}\,\, n\ge n_0,
\end{equation}
where $n_0=n_0(\varepsilon_0)$ is a positive large number.
Let
$$
\varepsilon(t_n,x)=e^{i\gamma_n}w_n(x+x_n)-Q.
$$
Then
\begin{equation}\label{10-1}
w_n(x)=e^{-i\gamma_n}Q(x-x_n)+e^{-i\gamma_n}\varepsilon(t_n,x-x_n).
\end{equation}
Therefore,  by \eqref{eqs:scaling-wn}, \eqref{10-1}, and
\eqref{variantial-structure}, we have
\begin{equation}\label{varepsilon-w}
\aligned
  v(t_n,x)&=e^{-i\gamma_n}\lambda_n^{-\frac
  12}(\varepsilon+Q)(t_n,\lambda_n^{-1}x-x_n),\quad \|\varepsilon(t_n)\|_{H^1}\le \varepsilon_0.
  \endaligned
\end{equation}
By the momentum and \eqref{varepsilon-w}, one has
\begin{align*}
  P(v(t_n))=&\mbox{Im} \int_{\R}\bar v(t_n) v_x(t_n)\,dx+\frac 14\int_{\R}
|v(t_n)|^4\,dx\notag\\
=&\lambda_n^{-2}\mbox{Im} \int_{\R}\big(\bar
\varepsilon(t_n)+Q\big)(t_n, \lambda_n^{-1}x-x_n)\cdot\big(\varepsilon_x(t_n)+Q_x\big)(t_n, \lambda_n^{-1}x-x_n)\,dx\notag\\
&+\frac
14\lambda_n^{-2}\int_{\R}\big|(\varepsilon(t_n)+Q)(t_n, \lambda_n^{-1}x-x_n)\big|^4\,dx\notag\\
=&\lambda_n^{-1}\mbox{Im} \int_{\R}\big(\bar
\varepsilon(t_n)+Q\big)\big(\varepsilon_x(t_n)+Q_x\big)\,dx+\frac
14\lambda_n^{-1}\int_{\R}|\varepsilon(t_n)+Q|^4\,dx\notag\\
=&\lambda_n^{-1}\Big[\frac 14\|Q\|_{L^4}^4+\mbox{Im} \int_{\R}
\big(Q_x\varepsilon(t_n)+Q\varepsilon_x(t_n)+\bar \varepsilon
\varepsilon_x(t_n)\big)\,dx\notag\\
&+\frac 14\int_{\R}\big(|\varepsilon+Q|^4-Q^4\big)\,dx \Big]\notag\\
=&\lambda_n^{-1}\cdot\Big(\frac
14\|Q\|_{L^4}^4+O(\|\varepsilon(t_n)\|_{H^1})\Big)\notag\\
\ge &\lambda_n^{-1}\cdot\Big(\frac
14\|Q\|_{L^4}^4-C\varepsilon_0\Big).
\end{align*}
Thus, by choosing $\varepsilon_0$ small enough such that
$C\varepsilon_0\le \frac 1{8}\|Q\|_{L^4}^4$, one has
$$
  P(v(t_n))\ge \lambda_n^{-1}\cdot\frac 18\|Q\|_{L^4}^4.
$$
By the momentum conservation law, this proves that
$
P(v_0)\lambda_n\ge \frac 18 \|Q\|_{L^4}^4.
$
That is, by \eqref{def_lambda_n},
\begin{equation}\label{upper-bound}
    \|v_x(t_n)\|_{L^2}\le 8 P(v_0)\|Q_x\|_{L^2}/\|Q\|_{L^4}^4.
\end{equation}
This violates \eqref{infty-sequence}. Therefore, we prove that there exists $C_0=C_0(\varepsilon_*,\|v_0\|_{H^1})$, such that
$$
\sup\limits_{t\in \R}\|v_x(t)\|_{L^2}\le  C_0.
$$
Now, for the solution $u$ of \eqref{eqs:DNLS} (with $\lambda=1$), we have $u=\mathcal G_{\frac34}v$. Thus, by \eqref{eqs:Gax}, we have
$$
u_x=e^{i\frac34 \int_{-\infty}^x |v(t,y)|^2\,dy}\big(i\frac34|v|^2v+v_x\big).
$$
Therefore, by \eqref{sharp Gagliardo-Nirenberg} and mass conservation law, for any $t\in \R$,
\begin{align*}
\|u_x(t)\|_{L^2}\le & \|v_x(t)\|_{L^2}+\frac34\|v(t)\|_{L^6}^3
\le  \|v_x(t)\|_{L^2}+\frac3{2\pi}\|v(t)\|_{L^2}^2\|v_x(t)\|_{L^2}\\
\le & C_0\big(1+\frac3{2\pi}\|u_0\|_{L^2}^2\big).
\end{align*}
Thus we finish the proof of
Theorem \ref{thm:main1}.

\begin{proof}[Proof of the Lemma \ref{lem:var-claim}] The proof may follow from the standard
variational argument, see \cite{Me-2001-gKdV, W2} for examples,
see also e.g. \cite{Ba-ASNSP-04, HmKe-IMRN-05} for its applications.
Here
we prove it by using the profile decomposition (see \cite{Garard-98}
for example) for sake of the completeness. Let
$\{f_n\}\subset H^1(\R)$ be any sequence satisfying
$$
\|f_n\|_{L^2}\to \|Q\|_{L^2},\quad \|\partial_x
f_n\|_{L^2}=\|Q_x\|_{L^2},\quad E(f_n)\to 0.
$$
Then by the profile decomposition, there exist $\{V^j\}, \{x_n^j\}$
such that, up to a subsequence,
$$
f_n =\sum\limits_{j=1}^L V^j(\cdot-x_n^j)+R_n^L,
$$
where $|x_n^j-x_n^k|\to \infty, \mbox{ as } n\to\infty, j\neq k$, and
\begin{equation}\label{Min-Norm-H1}
\aligned
  &\lim\limits_{L\to\infty}\left[\lim\limits_{n\to\infty}\|R_n^L\|_{L^6}\right]=0.
  \endaligned
\end{equation}
Moreover,
\begin{equation}\label{Sum-Norm-H1}
\aligned
  \|f_n\|_{H^s}^2&=\sum\limits_{j=1}^L \|V^j\|_{H^s}^2+\|R_n^L\|_{H^s}^2+o_n(1),  \mbox{ for } s=0,1,\\
  E(f_n)&=\sum\limits_{j=1}^L E(V^j)+E(R_n^L)+o_n(1).
  \endaligned
\end{equation}
Since $\|f_n\|_{L^2}\to \|Q\|_{L^2}$, one has by
\eqref{Sum-Norm-H1},
\begin{equation}\label{10-2}
\|V^j\|_{L^2}\le \|Q\|_{L^2}, \mbox{ for any } j\ge 1.
\end{equation}
This implies by sharp Gagliardo-Nirenberg inequality \eqref{sharp
Gagliardo-Nirenberg} that
$
E(V^j)\ge 0 \mbox{ for any } j\ge 1.
$
Further, by \eqref{Min-Norm-H1}, one has
$$
\lim\limits_{L\to\infty}\left[\lim\limits_{n\to\infty}E(R_n^L)\right]\ge
0.
$$
Since $E(f_n)\to 0$, we have
$
E(V^j)=0$ for any $j\ge 1.
$
Combining with \eqref{10-2} and \eqref{sharp
Gagliardo-Nirenberg}, this again yields that
$$
\|V^j\|_{L^2}=\|Q\|_{L^2}, \quad\mbox{ or } \quad V^j=0.
$$
Since $\|f_n\|_{L^2}\to \|Q\|_{L^2}$, there exactly exists one $j$, let $j=1$ such that
$$
\|V^1\|_{L^2}=\|Q\|_{L^2}, \quad V^j=0 \mbox{ for any } j\ge 2.
$$
Moreover, by \eqref{Sum-Norm-H1} and
\eqref{sharp Gagliardo-Nirenberg}, when $n\to \infty$, we have
$
R_n^L\to 0$  in  $ L^2(\R)$, and then further in $\H1$.
Therefore,
$$
\|\partial_x V^1\|_{L^2}=\|Q_x\|_{L^2},\quad E(V^1)= 0,
$$
and
$
f_n\to V^1 \mbox{ in }\H1,$ as $ n\to \infty
$. 
Now we note that $V^1$ attains the sharp
Gagliardo-Nirenberg inequality \eqref{sharp Gagliardo-Nirenberg},
thus by the uniqueness of the minimizer of Gagliardo-Nirenberg
inequality (see \cite{W}), we have
$
V^1= e^{-i\gamma_0}Q(\cdot-x_0)$,  for some $\gamma_0\in
\R, x_0\in \R$.
This proves the lemma.
\end{proof}

\section*{Acknowledgements} The author thanks Professors Yongsheng Li and Changxing Miao for their help in writing this paper and for
many valuable suggestions. The author is also grateful to the anonymous referees for helpful comments.

\end{document}